\newtheorem{theorem}{Theorem}
\theoremstyle{plain}
\newaliascnt{corollary}{theorem}  
\newtheorem{corollary}[corollary]{Corollary}
\newaliascnt{lemma}{theorem}  
\newtheorem{lemma}[lemma]{Lemma}  
\newaliascnt{proposition}{theorem}  
\newtheorem{proposition}[proposition]{Proposition}  
\newtheorem{result}[theorem]{Result}
\newtheorem*{result*}{Result}
\theoremstyle{definition}
\newtheorem{definition}[theorem]{Definition}
\theoremstyle{remark}
\newtheorem*{claim*}{Claim}
\newtheorem{remark}[theorem]{Remark}
\newcommand{\abs}[1]{\left|#1\right|}
\newcommand{\reals}{\ensuremath{\mathbb{R}}}
\newcommand{\integers}{\ensuremath{\mathbb{Z}}}
\newcommand{\complexes}{\ensuremath{\mathbb{C}}}
\newcommand{\torus}{\ensuremath{\mathbb{T}}}
	\renewcommand{\geq}{\geqslant}
\title{Invariant Meromorphic Functions on the Wallpaper Groups}
\author{Richard Chapling\thanks{\ Trinity College, Cambridge}}
\begin{document}

\maketitle

\begin{abstract}
  We construct the meromorphic functions invariant under the action of the sense-preserving wallpaper groups on the complex plane. We discuss possible generalisations of this to the general wallpaper groups. This provides the answer to a question posed on StackExchange \cite{36737} regarding the possible symmetries of elliptic functions.
\end{abstract}

\section{Introduction}
\label{sec:introduction}

Complex-valued functions invariant under the action of wallpaper groups have been considered by Farris and coauthors, in a number of articles \cite{Farris:1998bh,Farris:2002dq} and recently, a semi-popular book \cite{Farris:2015qf}. The functions considered in these studies are essentially all solutions to the wave equation on the Euclidean plane \( \reals^2 \).

However, since we may also take the plane to be the complex plane \( \complexes \), a natural alternative to this idea is to consider \emph{meromorphic} functions which are invariant under the action of the wallpaper groups. It is clear that they shall all be elliptic (see \autoref{thm:wallell} below), and hence we can expect to classify them completely using the decomposition in terms of Weierstrass elliptic functions. This is the aim of this paper. 

The first investigations in this area appear to be those posted as answers to two questions \cite{35671,36737} posed on \texttt{math.stackexchange}, where a number of examples of elliptic functions invariant under particular wallpaper symmetries are given (in particular, those corresponding to the wallpaper groups containing only sense-preserving transformations). We shall verify the conclusions therein, and provide what appears to be the natural extension to the other wallpaper groups, which include reflexions, by specifying that reflexions \(R\) in the group act by conjugation in both the domain and the range: this acts to keep the function meromorphic, while still allowing for extra structure to manifest itself.

In the remainder of this section, we outline the facts that we require from the theory of elliptic functions, and of the wallpaper groups. In \autoref{sec:meromorphic-wallpaper-funct}, we consider the functions invariant under the groups which do not contain reflexions, and then in \autoref{sec:gener-refl-groups}, we consider a way of generalising the idea to the rest of the wallpaper groups. In the last section, we suggest a number of ways in which the investigation could be extended to other groups and geometries.


\subsection{Elliptic functions}
\label{sec:elliptic-functions}

The facts in this section may be found in any standard text which discusses elliptic functions.\footnote{For example, \cite{Jones:1987ly}, Ch.~3 or \cite{WhittakerWatson:1927ve}, Ch.~XX.}

\begin{definition}
  An \emph{elliptic function} is a meromorphic function \( \complexes \to \mathbb{CP}^1 \) with two nonparallel periods: that is, there are nonzero complex \(\omega_1 \) and \( \omega_2 \), \( \omega_1/\omega_2 \notin \reals \), with \( f(z)=f(z+\omega_1)=f(z+\omega_2)\). The set \( \Lambda = \integers \omega_1 + \integers \omega_2\) is called the \emph{period lattice} of \(f\).
\end{definition}

We recall that the Weierstrass elliptic function associated with the lattice \(\Lambda\) is
\begin{equation}
  \label{eq:1}
  \wp{(z;\Lambda)} := \frac{1}{z^2}  + \sideset{}{'}\sum_{\omega \in \Lambda} \frac{1}{(z-\omega)^2} - \frac{1}{\omega^2},
\end{equation}
where \(\sum' \) conventionally means that the term \(\omega=0\) is omitted.

This has the following basic properties:
\begin{enumerate}
\item \( \wp \) is even.
\item It has a pole of order two at each point of \( \Lambda \), and is elsewhere analytic.
\item It satisfies the differential equation
  \begin{equation}
    \label{eq:2}
    \wp'^2 = 4\wp^3 - g_2 \wp - g_3,
  \end{equation}
  where \(g_2,g_3\) are the Eisenstein series \(g_2=60\sum' \omega^{-4}\) and \( g_3 = 140\sum' \omega^{-6}\).
\item Thus, it is the inverse of the integral
  \begin{equation}
    \label{eq:3}
    u = \int_{\wp}^{\infty} \frac{ds}{\sqrt{4s^3-g_2 s-g_3}}.
  \end{equation}
\item Most importantly for our purposes, any elliptic function with period lattice \(\Lambda\) can be written uniquely in the form
  \begin{equation}
    \label{eq:4}
    f(z) = R(\wp(z))+S(\wp(z))\wp'(z),
  \end{equation}
  where \(R,S\) are rational functions.
\item Later we shall have reason to use the addition formula for the Weierstrass elliptic function in the form
  \begin{equation}
    \label{eq:38}
    \wp(z+w) = \frac{1}{4}\left( \frac{\wp'(z)-\wp'(w)}{\wp(z)-\wp(w)} \right)^2 -\wp(z)-\wp(w)
  \end{equation}
\end{enumerate}

\subsection{Wallpaper groups}
\label{sec:wallpaper-groups}

\begin{definition}
  A \emph{wallpaper group} is a group of rigid motions of the plane, containing two non-parallel translations.
\end{definition}

It is well-known\footnote{One reference is \cite{Coxeter:1980ve}, which also contains presentations for each of the wallpaper groups; these differ slightly from ours primarily because the action of complex transformations on elliptic functions is most efficiently chosen to have as few reflexions as possible.} that there are precisely 17 wallpaper groups (up to isomorphism, of course).

The wallpaper groups induce an obvious group action on the complex plane, via:
\begin{equation}
  \label{eq:5}
  g.z = az+b
\end{equation}
if \(g\) is sense-preserving, and
\begin{equation}
  \label{eq:6}
  g.z = a\bar{z}+b,
\end{equation}
if \( g\) is sense-reversing (i.e. decomposes to an odd number of reflexions).

Since the conjugate would force the transformed function to be antimeromorphic, restricting to meromorphic functions requires that the group can contain no reflexions or glide reflexions: in particular, this excludes pg, pm, cm, pgg, pmg, pmm, cmm, p3m1, p31m, p4g, p4m and p6m. We are left with the 5 \emph{sensible groups},\footnote{so-called since they preserve \emph{sense}, or orientation.} p1, p2, p4, p3 and p6.

\section{Meromorphic wallpaper functions}
\label{sec:meromorphic-wallpaper-funct}

Let \(  G \) be a wallpaper group. We call \(f\) a \emph{wallpaper function} if for every complex number \(z\), \( f(g.z)=f(z) \) for every \(g \in G\): i.e. \(f\) is \( G\)-invariant.

As remarked above, considering meromorphic wallpaper functions forces us to restrict to the sensible wallpaper groups. We have the following immediate

\begin{result}
\label{thm:wallell}
  Every meromorphic wallpaper function is elliptic.
\end{result}
\begin{proof}
  \(G\) contains two nonparallel translations, which may be represented by \( z \mapsto z+ \omega_1 \), \( z \mapsto z + \omega_2 \), with \( \omega_1/\omega_2 \notin \reals\). Since \(f\) is \(G\)-invariant, we have
  \begin{equation}
    \label{eq:7}
    f(z) = f(z+\omega_1)=f(z+\omega_2),
  \end{equation}
  which is exactly the condition that \( f\) is elliptic.
\end{proof}

\begin{corollary}
  Every meromorphic wallpaper function is of the form
  \begin{equation}
    \label{eq:8}
    f(z) = R(\wp(z)) + S(\wp(z)) \wp'(z),
  \end{equation}
  where \(R,S\) are rational functions.
\end{corollary}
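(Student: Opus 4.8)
The plan is to obtain this purely as a corollary of \autoref{thm:wallell} together with the structure theorem for elliptic functions recorded as property~5 of \autoref{sec:elliptic-functions}, namely the representation \eqref{eq:4}. The entire content of the statement is the concatenation of these two facts, so no genuinely new machinery is needed; the work is in lining up the hypotheses correctly.

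First I would invoke \autoref{thm:wallell} to conclude that any meromorphic wallpaper function \(f\) is elliptic. This step also pins down the relevant lattice: the two nonparallel translations in \(G\), say \(z\mapsto z+\omega_1\) and \(z\mapsto z+\omega_2\), generate \(\Lambda=\integers\omega_1+\integers\omega_2\), and \(f\) has \(\Lambda\) among its periods. It is worth stating cleanly that \(\wp\) in \eqref{eq:8} is to be read as \(\wp(\,\cdot\,;\Lambda)\) for exactly this lattice, so that the decomposition is with respect to the periods imposed by the translation subgroup of \(G\). Second, I would apply property~5 directly to \(f\): since \(f\) is elliptic with period lattice \(\Lambda\), it can be written uniquely in the form \(R(\wp(z))+S(\wp(z))\wp'(z)\) with \(R,S\) rational, which is precisely \eqref{eq:8}. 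As a bonus, the uniqueness clause in property~5 carries over, although the corollary does not assert it.

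I do not expect any real obstacle here, since the only difficult ingredient, the decomposition \eqref{eq:4}, is taken as a known fact about elliptic functions. The single wallpaper-specific step, passing from \(G\)-invariance to ellipticity, has already been dispatched in \autoref{thm:wallell}. If anything, the only point requiring a word of care is the observation that the period lattice supplied by the group action is nondegenerate, i.e. \(\omega_1/\omega_2\notin\reals\), which is guaranteed by the definition of a wallpaper group and is exactly what makes \(\wp(\,\cdot\,;\Lambda)\) well defined; beyond recording this, there is nothing to prove.
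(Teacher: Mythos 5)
Your proposal is correct and is exactly the route the paper takes (the paper leaves the corollary without an explicit proof precisely because it is the immediate concatenation of \autoref{thm:wallell} with the decomposition \eqref{eq:4}). Your added care about identifying the lattice \(\Lambda\) with the one generated by the translation subgroup of \(G\) is a reasonable clarification but does not change the argument.
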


It is fairly clear how we must proceed now: we shall determine the restrictions on \(R,S\) for each of the sensible wallpaper groups. Since each of these is generated by the translations and a single rotation, it is sufficient to examine the transformation of \eqref{eq:8} under the rotation.

\subsection{p1}
\label{sec:p1}

The lattice cell for the group p1 is in general a parallelogram. This group possesses no isometries extra to the translations, and so no further restriction is imposed: we conclude that \emph{any} elliptic function is p1-invariant.

\subsection{p2}
\label{sec:p2}

Here again the translation lattice cell is in general a parallelogram. Now, however, we have also a rotation by \(\pi\), i.e. an idempotent map \( r \), \(r^2 = 1 \) that satisfies
\begin{equation}
  \label{eq:9}
  ar=a^{-1}, \quad br=b^{-1}.
\end{equation}
An obvious realisation of this isometry is the map \( z \mapsto -z \): hence we require
\begin{equation}
  \label{eq:10}
  f(z) = f(r.z) = f(-z),
\end{equation}
i.e. that \( f \) is even. Since \( \wp(z) \) is an even function, the decomposition (\ref{eq:8}) gives that \( R \circ \wp \) is an even function and \( \wp' \cdot S \circ \wp \) is odd, and hence the meromorphic p2-invariant functions are those of the form
\begin{equation}
  \label{eq:11}
  f(z) = R(\wp(z))
\end{equation}
for \( R \) a rational function.

\subsection{p4}
\label{sec:p4}

This group has a square lattice cell; hence we are forced into the \emph{lemniscate case}, where the period ratio is \( \omega_2/\omega_1 = i \), say. It is the group generated by the translations, and a rotation by \( \pi/2 \), \(r^4=1\). An obvious representation of this rotation is multiplication by \(i\). Since restricting to the subgroup generated by \(a,b\) and \( r^2 \) gives a p2, we already know that p4-invariant functions have to be even, and hence they are certainly of the form \( f(z) = R(\wp(z)) \). Therefore, we now have to understand what \(r\) does to \(\wp\).

\begin{lemma}
  Under the action of \(r\), we have
  \begin{equation}
    \label{eq:12}
    \wp(r.z) = -\wp(z).
  \end{equation}
\end{lemma}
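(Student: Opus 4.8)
The plan is to exploit the explicit form of the rotation \(r\) as multiplication by \(i\), together with the defining series \eqref{eq:1} for \(\wp\), and the fact that the square lattice is invariant under multiplication by \(i\). First I would observe that in the lemniscate case the period lattice \(\Lambda = \integers\omega_1 + \integers\omega_2\) with \(\omega_2/\omega_1 = i\) satisfies \(i\Lambda = \Lambda\): multiplying each lattice point by \(i\) simply permutes the lattice, since \(i\omega_1 = \omega_2\) and \(i\omega_2 = -\omega_1\) are again in \(\Lambda\). This closure property is the geometric heart of the matter and is exactly what the square cell buys us.

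Next I would substitute \(r.z = iz\) directly into \eqref{eq:1} and compute
\begin{equation}
  \wp(iz) = \frac{1}{(iz)^2} + \sideset{}{'}\sum_{\omega \in \Lambda} \frac{1}{(iz-\omega)^2} - \frac{1}{\omega^2}.
\end{equation}
The leading term is \(1/(iz)^2 = -1/z^2\). For the sum, I would factor \(i\) out of each summand, writing \(iz - \omega = i(z - \omega/i) = i(z + i\omega)\), so that \(1/(iz-\omega)^2 = -1/(z+i\omega)^2\); the correction term \(-1/\omega^2\) transforms similarly under \(\omega \mapsto i\omega\). Then I would reindex the sum by the substitution \(\omega \mapsto i\omega\) (equivalently \(\omega' = i\omega\)), which by the lattice-closure step above is a bijection of \(\Lambda \setminus \{0\}\) onto itself and therefore leaves the value of the absolutely convergent sum unchanged. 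Collecting the overall factor of \(-1\) from both the leading term and every summand yields \(\wp(iz) = -\wp(z)\), which is precisely \eqref{eq:12}.

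The main obstacle, such as it is, lies in handling the reindexing carefully: the Weierstrass series is only conditionally convergent in its naive summation order, so I would want to justify the rearrangement either by appealing to the standard absolute convergence of the symmetrically-grouped series (summing over shells \(|\omega| \leq N\)), or by noting that multiplication by \(i\) preserves the norm \(|\omega|\) and hence respects any such symmetric truncation exactly. A cleaner alternative would sidestep the series entirely: one could observe that \(z \mapsto -\wp(iz)\) is an elliptic function with the same lattice \(\Lambda\), the same double pole of principal part \(1/z^2\) at the origin (since \(-1/(iz)^2 = 1/z^2\)), and the same set of poles as \(\wp\); as the difference \(\wp(z) + \wp(iz)\) would then be an entire elliptic function, hence constant by Liouville, and the constant vanishes by evaluating at a convenient point (for instance using that \(\wp\) is even and comparing with \(z \mapsto iz\) at a half-period, or simply examining the Laurent expansion where the \(z^{-2}\) terms cancel and no constant term survives). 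Either route is short; I would present the direct series computation as the main argument and mention the Liouville argument as a conceptual check.
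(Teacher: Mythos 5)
Your proof is correct, but it takes a different route from the paper. You compute directly from the defining series \eqref{eq:1}, using the key geometric fact that the square lattice satisfies \(i\Lambda = \Lambda\), factoring \(i\) out of each summand and reindexing; the paper instead applies its own decomposition machinery, writing \(\wp(iz) = R_i(\wp(z)) + S_i(\wp(z))\wp'(z)\), arguing that evenness kills \(S_i\) and that preservation of the poles forces \(R_i\) to be an affine involution fixing \(\infty\), then pinning down \(R_i(w) = -w\) from the Laurent expansion at the origin. (The paper explicitly acknowledges the series computation as ``one possible way'' before choosing the other.) Your approach is more elementary and self-contained; the paper's buys a template that is reused essentially verbatim for the hexagonal lattice in the p3 case, where the multiplier is a cube root of unity rather than \(-1\). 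One small correction: the series \eqref{eq:1}, \emph{with} the \(-1/\omega^2\) correction terms, is already absolutely convergent (each summand is \(O(|\omega|^{-3})\)), so the reindexing by the bijection \(\omega \mapsto i\omega\) needs no appeal to symmetric truncation --- your worry about conditional convergence applies only to the uncorrected sum \(\sum (z-\omega)^{-2}\). Your Liouville-based alternative is also sound and is in fact closest in spirit to what the paper actually does.
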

\begin{proof}
  There are a number of ways of proving this. One possible way is to consider the series definition, or the expression of \(\wp\) as a power series used in deriving the differential equation. We shall do it using our decomposition: \( \wp(iz) = R_i (\wp(z))+S_i (\wp(z))\wp'(z) \). It is apparent by considering the principal part of the Laurent series at zero, \( z^{-2} \), that \( \wp \) does not map to itself under \(z \mapsto iz\). Clearly since \( \wp(z) \) is even, \( \wp(iz) \) is also even. Now, because \( \omega_2=i\omega_1 \), \( \wp(iz) \) has the same poles as \( \wp(z) \), with the same orders as those of \(\wp(z) \). Therefore \( \wp(iz) = R_i (\wp(z)) \). Moreover, we have
  \begin{equation}
    \label{eq:13}
    \wp(z)=\wp(-z) = \wp(i(iz)) = R_i(\wp(iz)) = R_i(R_i(\wp(z))),
  \end{equation}
  so \( R_i \) is a rational function with \( R_i(R_i(w))=w \) for any complex \(w\). Since the poles are preserved, \( R_i: \infty \mapsto \infty \), and hence \(R_i\) is an automorphism of \(\complexes\): \( R_i(w) = aw+b \). The only functions of this form that have order 2 are those with \(a=-1\). \( b=0\) because the constant term in the Laurent series expansion of \( \wp(z) \) at \(z=0\) is unaffected by applying \( r \).
\end{proof}

Having obtained this result, it is clear that we need
\begin{equation}
  \label{eq:14}
  f(z) = f(iz)= R(\wp(iz)) = R(-\wp(z)) = R(\wp(z)),
\end{equation}
so \(R\) is an even rational function, i.e. a function of \( R(z)=T(z^2) \) for some rational function \(T\). Then \(f(z) = R(\wp(z)) = T(\wp(z)^2)\) is invariant under the action of the generators of p4, and hence under the whole group action.

\subsection{p3}
\label{sec:p3}

By now the process should be clear: we consider the action of the stabiliser group of a lattice point on the basic functions \(\wp,\wp' \), then obtain the relation that the rational functions \(R,S\) must satisfy.

Here, we have a hexagonal lattice. The group is generated by the translations, and the rotation of order 3, which we shall again call \(r\). This time, \(r\) may be represented by multiplication by \(\omega\), where \(\omega\) is a nontrivial cube root of unity. We show:
\begin{lemma}
  Under the action of \( r \), we have
  \begin{equation}
    \label{eq:15}
    \wp(r.z) = \wp(\omega z) = \omega \wp(z), \quad \wp'(\omega z) = \wp'(z).
  \end{equation}
\end{lemma}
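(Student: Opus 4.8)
The plan is to follow the decomposition method just used for the p4 lemma, supplemented by the geometric fact that the hexagonal lattice is invariant under multiplication by $\omega$. First I would fix the lattice so that $r$ acts as $z\mapsto\omega z$, and record the crucial observation $\omega\Lambda=\Lambda$: writing $\Lambda=\integers\omega_1+\integers\omega_2$ with $\omega_2/\omega_1=\omega$, both $\omega\omega_1=\omega_2$ and $\omega\omega_2=\omega^2\omega_1=-(1+\omega)\omega_1=-\omega_1-\omega_2$ lie in $\Lambda$, so $\omega\Lambda\subseteq\Lambda$; since multiplication by $\omega$ is an $\reals$-linear map of determinant $|\omega|^2=1$, it is area-preserving, and the inclusion must be an equality.

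With this in hand I would run the decomposition argument. Write $\wp(\omega z)=R_\omega(\wp(z))+S_\omega(\wp(z))\wp'(z)$. Because $\wp$ is even, $\wp(\omega(-z))=\wp(-\omega z)=\wp(\omega z)$, so the left-hand side is even in $z$; as $R_\omega\circ\wp$ is even and $(S_\omega\circ\wp)\cdot\wp'$ is odd, uniqueness of the decomposition forces $S_\omega=0$, whence $\wp(\omega z)=R_\omega(\wp(z))$. Next, $\wp(\omega z)$ has its double poles exactly at $\omega^{-1}\Lambda=\Lambda$, so it is elliptic of the same order $2$ as $\wp$; therefore $R_\omega$ has degree one, i.e. it is a Möbius transformation. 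Comparing principal parts at $z=0$, where $\wp(z)\sim z^{-2}$ and $\wp(\omega z)\sim\omega^{-2}z^{-2}=\omega z^{-2}$ (using $\omega^{-2}=\omega$), shows that $R_\omega$ fixes $\infty$ with leading coefficient $\omega$, so it is affine, $R_\omega(w)=\omega w+\beta$; finally the absence of a constant term in the Laurent expansion of $\wp$ at the origin forces $\beta=0$, giving $\wp(\omega z)=\omega\wp(z)$.

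The statement for $\wp'$ then comes \emph{for free}: differentiating $\wp(\omega z)=\omega\wp(z)$ with respect to $z$ gives $\omega\wp'(\omega z)=\omega\wp'(z)$, whence $\wp'(\omega z)=\wp'(z)$.

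Alternatively, and more quickly, I could bypass the decomposition by invoking the homogeneity relations $\wp(\mu z;\mu\Lambda)=\mu^{-2}\wp(z;\Lambda)$ and $\wp'(\mu z;\mu\Lambda)=\mu^{-3}\wp'(z;\Lambda)$, both of which drop out of the series \eqref{eq:1} upon rescaling; specialising to $\mu=\omega$ and using $\omega\Lambda=\Lambda$ together with $\omega^{-2}=\omega$ and $\omega^{-3}=1$ delivers both identities at once. In either route the only substantive step is the lattice-invariance $\omega\Lambda=\Lambda$; once that is granted, everything else is bookkeeping with the Laurent expansion and the powers of $\omega$, so I expect no serious obstacle.
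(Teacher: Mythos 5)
Your proof is correct and follows essentially the same route as the paper: reduce to the decomposition $\wp(\omega z)=R_\omega(\wp(z))$, argue that $R_\omega$ must be affine because poles go to poles and the order is preserved, and then pin down the coefficient and constant term from the Laurent expansion at the origin (the paper handles $\wp'$ by the same principal-part comparison rather than by differentiating, which is an immaterial difference). The only additions are welcome ones: you explicitly verify the lattice invariance $\omega\Lambda=\Lambda$, which the paper takes for granted from the lattice being hexagonal, and your homogeneity shortcut is exactly the ``series definition'' alternative the paper mentions but does not carry out.
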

\begin{proof}
  Since the lattice is hexagonal, the given transformation maps poles to poles; by the same argument as p2, we conclude that the automorphism of the Riemann sphere which \(r\) induces descends to an automorphism of \( \complexes \) that cubes to the identity: in particular, they are affine transformations. We now check this automorphism is the one given in the Lemma.
  
  Clearly \( \wp(z)-z^{-2} \) and \( \wp'(z)+2z^{-3} \) are analytic at \(z=0\); since this must clearly remain true if we just replace \( z \) by \(\omega z\), the given multiplications are the only ones possible. Considering the constant terms in the Laurent expansions at zero implies that the affine transformation can only be a rotation, and hence we obtain the result.
\end{proof}

Now, of course, we just need to decide what sort of rational functions are invariant under \emph{these} transformations. We obviously have this invariance if and only if \(R(w),S(w)\) are invariant under \( w \mapsto \omega w \). What sort of functions are these? Clearly if \( w=\alpha \) is a root of the numerator (or the denominator), so are \( \omega\alpha\) and \( \omega^2\alpha \). Hence the polynomial contains the factor
\begin{equation*}
  (w-\alpha)(w-\omega\alpha)(w-\omega^2\alpha) = w^3 - (1+\omega+\omega^2)\alpha w^2 + (1+\omega^2+\omega^4)\alpha^2 w -\omega^3\alpha^3 = w^3-\alpha^3,
\end{equation*}
since the roots of unity sum to zero. Hence \( R,S \) are rational functions in \(w^3\).

\begin{remark}
  In fact, this argument allows us to deduce that \( g_2=0 \) for the hexagonal lattice (this is then a scaling of the \emph{equianharmonic case}): otherwise, the equation
  \begin{equation}
    \label{eq:16}
    \wp'^2 = 4\wp^3-g_2 \wp -g_3
  \end{equation}
  would be inconsistent on sending \( z \mapsto \omega z\), since the first two terms are invariant, but the third is not. A similar argument applies to the previous case and shows that \( g_3=0 \) for a square lattice.
\end{remark}

Given that \( g_2=0\), there is a simpler way to express this: since \( \wp^3=\wp'^2+g_3 \), we immediately have

\begin{corollary}
  The invariant meromorphic functions for p3 are precisely rational functions of \( \wp'(z) \), \( R(\wp'(z)) \).
\end{corollary}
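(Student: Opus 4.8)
The plan is to establish the stated equality of sets by proving both inclusions, leaning on the form already derived for p3 together with the vanishing of \(g_2\). For the inclusion ``every p3-invariant function is a rational function of \(\wp'\)'', I would begin from the representation obtained just above: a p3-invariant \(f\) can be written as
\begin{equation*}
  f(z) = \tilde{R}\bigl(\wp(z)^3\bigr) + \tilde{S}\bigl(\wp(z)^3\bigr)\,\wp'(z),
\end{equation*}
where \(\tilde{R},\tilde{S}\) are rational, since \(R,S\) were shown to be rational functions of \(w^3\). The engine of the proof is then the differential equation in its \(g_2=0\) form, \(\wp'^2 = 4\wp^3 - g_3\), which exhibits \(\wp^3\) as an affine, hence rational, function of \(\wp'^2\). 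Substituting this for every occurrence of \(\wp^3\) turns the first summand into a rational function of \(\wp'^2\) and the second into \(\wp'\) times a rational function of \(\wp'^2\); in both cases \(f\) is a rational function of \(\wp'\), giving the forward inclusion.

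The converse inclusion is the easy half. Given any rational \(P\), the function \(P(\wp'(z))\) is invariant under the two generating translations because \(\wp'\) is elliptic for the hexagonal lattice, and it is invariant under \(r\) by the second identity of the preceding Lemma, \(\wp'(\omega z)=\wp'(z)\). Since p3 is generated by these translations and \(r\), the function \(P(\wp')\) is fixed by the whole group, so it is indeed a p3-invariant meromorphic function.

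The one point that needs genuine care---and hence the main obstacle---is upgrading the set-theoretic inclusion to the \emph{precise} equality asserted. Here I would observe that, because \(\wp^3\) and \(\wp'^2\) are affinely related, ``rational in \(\wp^3\)'' and ``rational in \(\wp'^2\)'' describe one and the same class, and then invoke the unique decomposition of a rational function into even and odd parts: any rational \(P(x)\) splits as \(P(x)=P_{\mathrm e}(x)+P_{\mathrm o}(x)\) with \(P_{\mathrm e}\) a rational function of \(x^2\) and \(P_{\mathrm o}\) equal to \(x\) times a rational function of \(x^2\). Taking \(x=\wp'\), this shows that the invariant functions built from \(\tilde{R},\tilde{S}\) exhaust \emph{exactly} the rational functions of \(\wp'\), with none missing and none spurious, which closes the equality and completes the proof.
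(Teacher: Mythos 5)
Your proof is correct and follows essentially the same route as the paper: the paper's proof simply says the corollary ``follows immediately from the above remark and the lemma,'' meaning exactly the chain you spell out --- start from \( \tilde{R}(\wp^3)+\tilde{S}(\wp^3)\wp' \), use \( \wp'^2=4\wp^3-g_3 \) (with \(g_2=0\)) to trade \(\wp^3\) for an affine function of \(\wp'^2\), and recognise the result as the even/odd decomposition of a general rational function of \(\wp'\). Your explicit verification of the converse inclusion via \( \wp'(\omega z)=\wp'(z) \) is a welcome elaboration of what the paper leaves implicit.
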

\begin{proof}
  This follows immediately from the above remark and the lemma.
\end{proof}

\begin{remark}
  For the case of a hexagonal lattice, there is an alternative presentation available in terms of \emph{Dixon's elliptic function} \( \operatorname{cm}(z) \), which may be given by
  \begin{equation}
    \label{eq:18}
    \operatorname{cm}(z) = 1+\frac{2}{3\wp'(z)-1};
  \end{equation}
  it is clear from this expression that any rational function of \(\wp'\) is also a rational function of this function. Sadly this does not extend to any other cases.
\end{remark}

\subsection{p6}
\label{sec:p6}

Fortunately, we do not need to carry out any further calculation here: p6 is generated by the translations, and two commuting rotations: \(r\), of order 2, and \(s\), of order 3. But we know what elliptic functions are invariant under each of these separately: those invariant under both are then just the intersection of the two sets. We obtain

\begin{proposition}
  The meromorphic p6-invariant functions are precisely the functions
  \begin{equation}
    \label{eq:17}
    f(z) = U(\wp(z)^3) = V(\wp'(z)^2),
  \end{equation}
  where \(U,V\) are rational functions.
\end{proposition}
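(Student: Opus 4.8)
The plan is to exploit the structural observation already made in the text: p6 is generated by the translations together with two commuting rotations, $r$ of order $2$ and $s$ of order $3$. Consequently a meromorphic function is p6-invariant exactly when it is invariant under each of these generators, so the class of p6-invariant functions is simply the \emph{intersection} of the p2-invariant class (invariance under $r$) with the p3-invariant class (invariance under $s$). Since the lattice of p6 is again hexagonal, the actions of $r$ and $s$ on $\wp$ and $\wp'$ are precisely those computed in the p2 and p3 subsections, and in particular $g_2=0$ once more.

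First I would impose $r$. As in p2, the order-two rotation is realised by $z\mapsto -z$, so invariance forces $f$ to be even. Writing the general form $f = R(\wp)+S(\wp)\wp'$ and recalling that $\wp$ is even while $\wp'$ is odd, evenness eliminates the second term and leaves $f = R(\wp)$ for some rational $R$. Next I would impose $s$: by the p3 lemma, $s$ acts by $\wp\mapsto\omega\wp$ and $\wp'\mapsto\wp'$, so that $f(s.z)=R(\omega\wp)$, and invariance demands $R(\omega w)=R(w)$. By the same factorisation argument used for p3 (roots occurring in triples $\alpha,\omega\alpha,\omega^2\alpha$, forcing factors $w^3-\alpha^3$), this means $R$ is a rational function of $w^3$. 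Hence $f = U(\wp^3)$ for a rational $U$, and conversely any such function is visibly invariant under both generators, hence under all of p6.

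It then remains to pass to the equivalent second form, and here I would invoke the differential equation \eqref{eq:2}, which, since $g_2=0$ on the hexagonal lattice, reduces to $\wp'^2 = 4\wp^3-g_3$. Substituting $\wp^3 = \tfrac14(\wp'^2+g_3)$ converts $U(\wp^3)$ into $V(\wp'^2)$ with $V(t)=U\bigl(\tfrac14(t+g_3)\bigr)$, while the inverse substitution recovers $U$ from $V$; both remain rational. I do not anticipate a genuine obstacle in this argument, which is essentially bookkeeping on two conditions already established. The only points needing care are to confirm that p6 indeed shares the hexagonal lattice (so that $g_2=0$ and the $V(\wp'^2)$ presentation is available at all), and to apply the conditions in the right order — eliminating the lone $\wp'$ term by evenness \emph{before} imposing the cube-root condition on $R$ — so that the reduction to $U(\wp^3)$ is clean.
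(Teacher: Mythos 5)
Your proposal is correct and follows essentially the same route as the paper: impose the order-two rotation to force evenness and kill the $S(\wp)\wp'$ term, then impose the order-three rotation to force $R$ to be a rational function of $w^3$, and finally use the degenerate differential equation on the hexagonal lattice to trade $\wp^3$ for $\wp'^2$. If anything you are more careful than the paper in the last step, since you keep the correct relation $\wp^3=\tfrac14(\wp'^2+g_3)$ where the text drops the factor of $4$.
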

\begin{proof}
  Since \( f \) is invariant under \( r \), the argument of \autoref{sec:p2} shows that it must be even. Hence \( S\equiv 0 \) in the usual decomposition. The argument for p3 in the previous subsection shows that \( R(w) \) is a function of \( w^3 \), so we can take instead \( U(y) = R(w) \) with \( y=w^3 \), which gives the first equality; the relationship \( \wp^3=\wp'^2-g_3 \) gives the second.
\end{proof}

\subsection{Summary}
\label{sec:summary}

We have now classified all meromorphic functions invariant under the action of the sensible groups; we give here a summary of the results obtained in this section.

\begin{table}[ht]
  \centering
  \begin{tabular}{c|l}
    \(G\) & form of \( G \)-invariant functions \\
    \hline
    p1 & \( R(\wp) + S(\wp) \cdot \wp' \) \\
    p2 &  \( R(\wp) \)\\
    p4 & \( R(\wp^2) \) \\
    p3 &  \( R(\wp') \) \\
    p6 & \( R(\wp'^2) \) or \( S(\wp^3) \)
  \end{tabular}
  \caption{\(G\)-invariant meromorphic functions for the sensible wallpaper groups; \(R,S\) are arbitrary rational functions.}
  \label{tab:sensibleinvariants}
\end{table}

\section{Generalisations to groups containing reflexions}
\label{sec:gener-refl-groups}

Because a meromorphic function of \( \bar{z}\) is no longer meromorphic, there are no nonconstant meromorphic functions invariant under \( m.z=\bar{z} \). It is therefore necessary to consider an alternative group action to continue producing invariant meromorphic functions. One way to do this is to modify the group action in the case of reflexions, and since we are discussing meromorphic functions, a natural idea is to associate sense-reversing transformations in the domain with sense-reversing transformations in the codomain, i.e., take conjugates in both domain and codomain; this will preserve conformality and hence meromorphicity.\footnote{We discuss some alternatives to this in the last section, but this seems to be the simplest extension.}

Concretely, we replace the action of a general (glide-)reflexion \(m\), with \( m.z = a\bar{z}+b \) for \( \abs{a}=1 \), by
\begin{equation}
  \label{eq:39}
  \bar{m}.f(z) = \overline{f(m.z)} = \overline{f(a\bar{z}+b)},
\end{equation}
which acts on both sides of \(f\), and retains the meromorphicity of \( f \) as desired.

There are essentially three different cases to consider: in increasing order of complexity, we have:
\begin{enumerate}
\item Eight groups that can be generated by adding one reflexion in the real axis to the sensible groups, as follows:
  \begin{enumerate}
  \item pm, p2mm, p4mm, p31m, and p6mm can be obtained from the sensible groups by taking one translation parallel to the real axis, and adding reflexion in the real axis (a ``rectiform basis'').
  \item cm, c2mm and p3m1 can be obtained from the sensible groups p1, p2 and p3 by choosing the translations to be interchanged by reflexion in the real axis (a ``rhombic basis'').
  \end{enumerate}
\item Two groups which requires a non-axial reflexion, p2mg and p4mg.
\item Two groups which require a glide reflexion, pg and p2gg.
\end{enumerate}

The cases in 1 are much easier to understand than those in 2 and 3.

\subsection{Groups containing axial reflexions}
\label{sec:groups-cont-refl}

We begin with a discussion of general reflexions. The following lemma is standard:

\begin{lemma}
  Let \(m\) act by complex conjugation, \( m.z=\bar{z} \). Then \(\bar{m}.f(z) = f(z)\) iff \(f(z)\) is real for real \(z\).
\end{lemma}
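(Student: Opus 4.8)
The plan is to unfold the definition of the modified action and then recognise the claim as an instance of the \emph{Schwarz reflection principle}. By \eqref{eq:39}, the invariance condition \(\bar{m}.f(z) = f(z)\) reads \(\overline{f(\bar{z})} = f(z)\) for all \(z \in \complexes\), and it is exactly this identity that I want to show is equivalent to \(f\) taking real values on \(\reals\).

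The forward implication is immediate. Assuming \(\overline{f(\bar{z})} = f(z)\) and restricting to a real argument \(z = x\), so that \(\bar{z} = x\), gives \(\overline{f(x)} = f(x)\), which says precisely that \(f(x) \in \reals\).

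For the converse I would introduce the auxiliary function \(g(z) := \overline{f(\bar{z})}\). Since \(z \mapsto \bar{z}\) is anti-conformal and \(f\) is meromorphic, the two orientation reversals cancel and \(g\) is again meromorphic on \(\complexes\). The hypothesis that \(f\) is real on the real axis gives, for every real \(x\), \(g(x) = \overline{f(\bar{x})} = \overline{f(x)} = f(x)\), so \(f\) and \(g\) agree on \(\reals\). The real axis has accumulation points in \(\complexes\), so the identity theorem for meromorphic functions forces \(g \equiv f\) throughout \(\complexes\); that is, \(\overline{f(\bar{z})} = f(z)\), which is exactly \(\bar{m}.f = f\).

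The only delicate point is this appeal to the identity theorem in the reverse direction: one must verify that \(g\) is genuinely meromorphic (not merely well-defined as a formula), and that \(f\) and \(g\) coincide on a set possessing a limit point. The real line supplies such a set, and meromorphicity of \(g\) follows from the observation that conjugating both the input and the output of a meromorphic \(f\) restores a holomorphic dependence on \(z\). Poles of \(f\) lying on \(\reals\) cause no difficulty, since two meromorphic functions agreeing on a set with an accumulation point are identical as meromorphic functions.
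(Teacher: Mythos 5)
Your proof is correct and is essentially the paper's own argument: the paper considers the meromorphic function \(f(z)-\bar{m}.f(z)\), which on the real axis is (a multiple of) \(\Im(f(z))\), and invokes the identity theorem exactly as you do with \(g(z)=\overline{f(\bar z)}\). The two presentations differ only in whether one phrases the conclusion as ``the difference vanishes identically'' or ``the two meromorphic functions coincide''.
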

which we recognise as the Schwarz reflexion principle; we recall that the proof examines the meromorphic function \( f(z)-\bar{m}.f(z) \) on the real axis, where it is precisely equal to \( \Im(f(z)) \).

\begin{corollary}
\label{thm:mirrorcor}
  Let \( m \) be a reflexion in a line \( L \subset \complexes \). Then \(\bar{m}.f(z) = f(z)\) iff \(f(z)\) is real for \(z \in L \).
\end{corollary}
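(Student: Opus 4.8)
The plan is to reduce the corollary about a reflexion in an arbitrary line $L$ to the case of the real axis, which is exactly the preceding lemma (the Schwarz reflexion principle). The key observation is that any line $L \subset \complexes$ can be mapped to the real axis by a rigid motion of the plane, and this motion is itself sense-preserving, hence given by $\phi(z) = \alpha z + \beta$ with $\abs{\alpha}=1$; geometrically, $\phi$ rotates $L$ to be horizontal and translates it onto $\reals$. First I would write the general axial reflexion $m$ as a conjugate of complex conjugation: if $m_0.z = \bar z$ denotes reflexion in the real axis, then $m = \phi^{-1} \circ m_0 \circ \phi$, so that $m.z = \phi^{-1}(\overline{\phi(z)})$, and one checks directly that $L = \phi^{-1}(\reals)$ is the fixed-point set of $m$.

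Next I would transfer the invariance condition through $\phi$. The cleanest way is to introduce the auxiliary function $F(w) := f(\phi^{-1}(w))$, obtained by pulling $f$ back along the sense-preserving map $\phi^{-1}$. The point is that $\bar m . f = f$ should be equivalent to $\bar{m_0} . F = F$, because $\phi$ intertwines the two reflexions. I would verify this by direct substitution: computing $\overline{F(\bar w)} = \overline{f(\phi^{-1}(\bar w))}$ and comparing with $f(\phi^{-1}(w))$, using the identity $m.z = \phi^{-1}(\overline{\phi(z)})$ to see that the condition $f(z) = \overline{f(m.z)}$ for all $z$ is the same statement as $F(w) = \overline{F(\bar w)}$ for all $w$ (setting $w = \phi(z)$). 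The main thing to be careful about here is that $\phi$ is sense-preserving and $\abs{\alpha}=1$, so that conjugation genuinely commutes past $\phi$ in the right way and $F$ remains meromorphic.

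Once the equivalence $\bar m . f = f \iff \bar{m_0}.F = F$ is established, the lemma applies to $F$: the latter holds iff $F(w)$ is real for real $w$, i.e.\ iff $f(\phi^{-1}(w))$ is real for $w \in \reals$. Finally I would translate this back: $w$ ranges over $\reals$ exactly when $z = \phi^{-1}(w)$ ranges over $\phi^{-1}(\reals) = L$, so the condition becomes ``$f(z)$ is real for $z \in L$'', which is the claim.

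I do not expect any serious obstacle; the whole argument is a change-of-coordinates dressing on the Schwarz reflexion principle. The only delicate point — and the step I would present most carefully — is checking that conjugation interacts correctly with the sense-preserving change of variable, so that $F$ is meromorphic and the reflexion $m_0$ really is the $\phi$-conjugate of $m$; everything else is bookkeeping. One could alternatively argue in place by examining $f(z) - \bar m . f(z)$ directly on $L$ and noting it is twice the imaginary part of $f$ there (after rotating coordinates), mirroring the remark given for the real-axis case, but the conjugation argument via $\phi$ makes the reduction to the already-proved lemma completely transparent.
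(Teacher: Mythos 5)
Your proof is correct and matches the paper's intent: the paper simply states that the corollary ``has essentially the same proof as the previous lemma,'' and your conjugation by a sense-preserving rigid motion $\phi$ is the standard way of making that reduction precise (you even note the in-place alternative of examining $f(z)-\bar{m}.f(z)$ on $L$, which is closer to the paper's literal phrasing). No gaps; the verification that $\bar{m}.f=f$ is equivalent to $\bar{m_0}.F=F$ for $F=f\circ\phi^{-1}$ is exactly the bookkeeping required.
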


This has essentially the same proof as the previous lemma.

\begin{corollary}
  The Weierstrass functions \(\wp(z),\wp'(z)\) are real for real \(z\) iff their lattices are invariant under conjugation.
\end{corollary}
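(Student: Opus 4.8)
The plan is to prove the biconditional in both directions using the series representation (\ref{eq:1}) of \(\wp\), together with the previous corollary (\autoref{thm:mirrorcor}) applied to the real axis. The key observation is that the defining sum for \(\wp(z;\Lambda)\) is a sum of terms \((z-\omega)^{-2} - \omega^{-2}\) indexed over the lattice, and complex conjugation can be pushed through this sum term by term.

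First I would establish the forward direction assuming \(\wp(z)\) (and hence \(\wp'(z)\)) is real for real \(z\). The natural device is to compare \(\wp\) with the conjugated function: for real \(z\), taking the conjugate of (\ref{eq:1}) gives
\begin{equation*}
  \overline{\wp(z;\Lambda)} = \frac{1}{z^2} + \sideset{}{'}\sum_{\omega \in \Lambda} \frac{1}{(z-\bar\omega)^2} - \frac{1}{\bar\omega^2},
\end{equation*}
which is exactly \(\wp(z;\bar\Lambda)\), where \(\bar\Lambda = \{\bar\omega : \omega \in \Lambda\}\). Reality of \(\wp\) on the real axis then forces \(\wp(z;\Lambda) = \wp(z;\bar\Lambda)\) for all real \(z\), and since two meromorphic functions agreeing on a set with a limit point coincide, this identity holds on all of \(\complexes\). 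The final step is to argue that equality of the two Weierstrass functions forces equality of the lattices: because \(\Lambda\) is precisely the pole set of \(\wp(\cdot;\Lambda)\), and \(\bar\Lambda\) the pole set of \(\wp(\cdot;\bar\Lambda)\), the two pole sets must agree, giving \(\Lambda = \bar\Lambda\), i.e. \(\Lambda\) is conjugation-invariant.

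The converse is the easier direction: if \(\Lambda = \bar\Lambda\), then the conjugation computation above shows \(\overline{\wp(z)} = \wp(\bar z)\) identically, and restricting to real \(z\) gives \(\overline{\wp(z)} = \wp(z)\), so \(\wp\) is real there; differentiating (or repeating the argument with the series for \(\wp'\)) handles \(\wp'\). Alternatively, one can simply invoke \autoref{thm:mirrorcor} with \(L\) the real axis, since reality on \(L\) is exactly the invariance condition \(\bar m.\wp = \wp\).

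The main obstacle I anticipate is the subtlety in the pole-set argument: one must be careful that the rearrangement \(\overline{\wp(z;\Lambda)} = \wp(z;\bar\Lambda)\) is legitimate, which rests on the absolute convergence of the Eisenstein-type sum (guaranteed by the standard \(\sum' |\omega|^{-s}\) convergence for \(s>2\)), and that \(\bar\Lambda\) really is a lattice with the claimed pole structure—both routine but worth stating. A cleaner route that sidesteps identifying lattices from pole sets is to note that \(\wp(z;\Lambda)\) determines \(\Lambda\) uniquely as its lattice of periods, so \(\wp(\cdot;\Lambda) = \wp(\cdot;\bar\Lambda)\) immediately yields \(\Lambda = \bar\Lambda\); I would likely use this framing to keep the proof short.
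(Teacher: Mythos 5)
Your proof is correct and takes essentially the same route as the paper's: both rest on identifying \(\overline{\wp(\bar z;\Lambda)}\) with \(\wp(z;\bar\Lambda)\) and comparing pole sets to decide whether \(\Lambda=\bar\Lambda\). The only cosmetic difference is in the converse direction, where you note that the two defining series coincide when \(\Lambda=\bar\Lambda\), while the paper instead deduces the vanishing of \(f(z)-\bar{m}.f(z)\) from Liouville's theorem for elliptic functions together with the Laurent expansion at the origin.
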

\begin{proof}
  This is clear: if not, the function \( f(z)-\bar{m}.f(z) \) from the previous lemma has poles that are not cancelled out. If so, \( f(z)-\bar{m}.f(z) \) has no poles and is elliptic; hence it is constant, and examining the Laurent series at \(z=0\) shows that it is zero.
\end{proof}

The next proposition gives the characterisation we want.

\begin{proposition}
  Let \(W\) be a wallpaper group containing reflexions, and define \( \bar{W} \) as the group obtained by replacing \( m \mapsto \bar{m} \) for any reflexion \(m \in W\) (it is easy to check that this is consistent). Then \(f\) is an elliptic function invariant under \(\bar{W}\) if and only if it is
  \begin{enumerate}
  \item invariant under the sensible subgroup of \(W\), and
  \item real on the lines invariant under \(m\), for every \(m \in W\).
  \end{enumerate}
\end{proposition}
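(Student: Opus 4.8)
The plan is to prove both implications by reducing invariance under the whole modified group $\bar{W}$ to invariance under its sense-preserving (sensible) subgroup, which I shall call $W^+$, together with invariance under a single reflexion, and then to convert the latter into a reality condition via \autoref{thm:mirrorcor}. The structural facts I would use are that $W^+$ has index two in $W$, and that — because $W$ is assumed to contain a genuine reflexion $m_0$ — every sense-reversing element $h \in W$ factors as $h = m_0 g$ with $g = m_0 h \in W^+$ (recall $m_0^2 = 1$). I would also take for granted the consistency of the action $\bar{\cdot}$, namely that combining two conjugated reflexions reproduces the unconjugated action of their sense-preserving product; this is the ``easy check'' already flagged in the statement.

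For the forward implication, suppose $f$ is $\bar{W}$-invariant. Restricting attention to $W^+$, whose elements act without conjugation, gives condition 1 immediately. For condition 2, let $m$ be any reflexion in $W$ with mirror line $L$; then $\bar{W}$-invariance includes $\bar{m}.f = f$, and \autoref{thm:mirrorcor} says precisely that $f$ is real on $L$. As $m$ is arbitrary, $f$ is real on every invariant line, which is condition 2.

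For the converse, assume conditions 1 and 2. Condition 1 supplies $W^+$-invariance, so it remains only to verify $\bar{h}.f = f$ for each sense-reversing $h$. Fixing one reflexion $m_0$ with mirror line $L_0$, condition 2 gives that $f$ is real on $L_0$, so \autoref{thm:mirrorcor} yields $\bar{m_0}.f = f$, i.e. $\overline{f(m_0.w)} = f(w)$ for all $w$. Writing $h = m_0 g$ with $g \in W^+$ and substituting $w = g.z$, I would compute $\bar{h}.f(z) = \overline{f(m_0.(g.z))} = f(g.z) = f(z)$, the last step by $W^+$-invariance. Hence $f$ is invariant under every sense-reversing element, and therefore under all of $\bar{W}$.

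Two remarks close the argument. First, the converse uses the reality of $f$ on only the single line $L_0$; combined with $W^+$-invariance this already forces reality on every $W^+$-image of $L_0$, so condition 2 as stated (reality on all invariant lines) is automatically consistent and is in fact equivalent, given condition 1, to reality on one representative line. Second, any glide reflexions present in $W$ possess no pointwise-fixed line and so impose no reality condition of their own; their invariance is subsumed by the same factorization $h = m_0 g$. The only genuinely delicate point is the bookkeeping in that factorization — checking that $\bar{\cdot}$ composes as a bona fide action, so that invariance under $\bar{m_0}$ and under $W^+$ really does propagate to invariance under $h = m_0 g$ — but once \autoref{thm:mirrorcor} is in hand this reduces to the short computation displayed above rather than a real obstacle.
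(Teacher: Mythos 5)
Your proposal is correct and follows essentially the same route as the paper: decompose every element of \(\bar{W}\) into a sensible map and (at most one) reflexion, handle the sensible part by condition 1, and convert reflexion-invariance into the reality condition via \autoref{thm:mirrorcor}. You merely make explicit the index-two factorization \(h=m_0g\) and the resulting substitution, which the paper's one-line proof leaves implicit.
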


\begin{proof}
  Every transformation in \( \bar{W} \) can be written as a composition of sensible maps and reflexions: hence \( f \) is invariant if and only if it is invariant under each of these separately. Invariance under sensible transformations is the first condition, and by \autoref{thm:mirrorcor}, the second condition occurs if and only if \( f \) is invariant under \(\bar{m}\).
\end{proof}

As we noted above, if we align the lattice correctly, for most of the wallpaper groups containing reflexions it is sufficient to check the real line. In particular, this is the case for pm, p2mm, p4mm, p31m, and p6mm with rectiform lattices, and cm, c2mm, and p3m1 with rhombic lattices. In these cases, all other reflexions are generated by combining one in the real axis with elements of the sensible subgroup.

There is one exception to this, and it is the group p2mg, which uses the reflexion
\begin{equation}
  \label{eq:36}
  R.z=\bar{z}+\tfrac{1}{2}\beta i,
\end{equation}
which leaves invariant the ``quarter-translation line'' \(\Im(z)=\tfrac{1}{4}\beta\). Since this condition is rather hard to visualise, we shall exploit a different technique to produce a more comprehensible set of functions.

\subsection{pg and p2mg}
\label{sec:pg-p2mg}

From now on, the discussion becomes considerably more technical. Our lattices are all at worst rectangular, with \( \omega_1=\alpha \) and \(\omega_2=i\beta\) for real \(\alpha\) and \(\beta\).

We have the following result, from the addition formula:
\begin{equation}
  \label{eq:22}
  \wp(z+\tfrac{1}{2}\omega_i) = e_i + \frac{(e_i-e_j)(e_i-e_k)}{\wp(z)-e_i};
\end{equation}
differentiating this gives
\begin{equation}
  \label{eq:23}
  \wp'(z+\tfrac{1}{2}\omega_i) = \frac{(e_i-e_j)(e_i-e_k)}{(\wp(z)-e_i)^2}\wp'(z)
\end{equation}

Therefore, to understand the glide reflexion \( L.z = \bar{z}+\tfrac{1}{2}\alpha \), used in pg and p4mg, we need to find the rational functions such that
\begin{equation}
  \label{eq:24}
  R(u) = \overline{R\left( a+\frac{b}{u-a} \right)}, \quad S(u) = \frac{b}{(u-a)^2} \overline{S\left(a+\frac{b}{u-a}\right)},
\end{equation}
for fixed real constants \(a,b\) with \(b \neq 0\). In fact, we have the inequalities\footnote{See, for example, \cite{Walker:1996cr} §~8.4.2}
\begin{equation}
  \label{eq:26}
  e_1 > e_3 > e_2
\end{equation}
for a rectangular lattice, so whichever of \( e_1 \) or \(e_2\) we choose, we shall end up with \(b>0\), and hence we can write \( b=c^2 \) for a unique positive \(c\).

A sensible thing to do here is change to functions of \(w=(u-a)/c\), which gives the equations
\begin{equation}
  \label{eq:25}
  Q(w)=\overline{Q\left( \frac{1}{w} \right)}, \quad T(w) = \frac{1}{w^2} \overline{T\left( \frac{1}{w}\right)},
\end{equation}
where \( Q(w)=R(c(w+a))\) and \( T(w)=S(c(w+a)) \). It is easy to show the following:

\begin{proposition}
\label{thm:conjinvinvarratfns}
  The functions \(Q\) described above are all of the form
  \begin{equation}
    \label{eq:27}
    Q(w) = C w^p \frac{\prod_k \left((w-a_k)(\bar{a}_kw-1)\right)^{\lambda_k}}{\prod_l \left((w-b_l)(\bar{b}_lw-1)\right)^{\mu_l}},
  \end{equation}
  where \(a_k,b_l \neq 0\), \( \sum_l \mu_l = p+\sum_k \lambda_k \), \(C\) is real, and any \(T(w)\) is given by multiplying a function satisfying the same conditions as \(Q(w)\) by \(w\).
\end{proposition}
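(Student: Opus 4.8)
The plan is to recognise that the two equations in \eqref{eq:25} are symmetry conditions for the anti-holomorphic involution $w \mapsto 1/\bar w$ (inversion in the unit circle), so that the problem reduces to classifying the rational functions symmetric under this involution — equivalently, the rational functions that are real on $|w| = 1$. The first move is to make the overline precise: since $1/w = \bar w$ on the unit circle, the relation $Q(w) = \overline{Q(1/w)}$ should be read as $Q(w) = \overline{Q(1/\bar w)}$, and setting $Q^\sharp(w) := \overline{Q(1/\bar w)}$ defines a genuine rational function of $w$ (conjugate, invert, conjugate again) with $(Q^\sharp)^\sharp = Q$. The equation to be solved is then simply $Q = Q^\sharp$.

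Next I would pass to divisors. Factor $Q$ over $\mathbb{CP}^1$, recording the orders of its zeros and poles — including those at $0$ and $\infty$ — together with a leading constant. A direct computation shows that $(\,\cdot\,)^\sharp$ carries a zero (or pole) of order $m$ at $\alpha \neq 0,\infty$ to one of order $m$ at $1/\bar\alpha$, interchanges the orders at $0$ and $\infty$, and conjugates the leading constant. Imposing $Q = Q^\sharp$ therefore forces two things: first, the divisor of $Q$ is invariant under $\alpha \mapsto 1/\bar\alpha$, so every zero $a_k$ away from the origin is accompanied by a zero at $1/\bar a_k$ of equal order, and likewise every pole $b_l$ by a pole at $1/\bar b_l$; second, the leading constant $C$ is real. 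Grouping each reflected pair into $(w-a_k)(\bar a_k w - 1)$ — which vanishes exactly at $a_k$ and $1/\bar a_k$ and satisfies $\big((w-a_k)(\bar a_k w - 1)\big)^\sharp = (w-a_k)(\bar a_k w - 1)/w^2$ — and collecting the combined order at the origin into the factor $w^p$ yields precisely the shape \eqref{eq:27}. The degree relation $\sum_l \mu_l = p + \sum_k \lambda_k$ then falls out of bookkeeping the net power of $w$ contributed by the $w^{-2}$ that each symmetric pair produces under $\sharp$; equivalently, it is the statement that the orders of $Q$ at $0$ and at $\infty$ must agree.

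For the companion equation on $T$ I would substitute $U(w) := w\,T(w)$. Since $\overline{T(1/\bar w)} = w\,\overline{U(1/\bar w)}$, the extra factor of $w$ exactly cancels the $1/w^2$ twist, turning $T(w) = \tfrac{1}{w^2}\overline{T(1/\bar w)}$ into $U = U^\sharp$. Hence $U$ is of the form just classified, and $T$ is recovered from such a function by a single factor of $w$, which is the asserted relationship between $T$ and the $Q$-type functions.

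The main obstacle, and the only place needing care rather than routine algebra, is the treatment of zeros and poles lying on the invariant circle $|w| = 1$ itself, the fixed-point set of the involution, together with the reality of the constant. Such a point $\alpha$ with $|\alpha|=1$ is its own reflection, so the pairing argument produces the even-order factor $(w-\alpha)^2$; yet a rational function real on the circle can carry an odd-order zero there, paid for by an imaginary leading constant (for example $i(w-1)/(w+1)$), a situation not captured by \eqref{eq:27} with $C$ real. I would therefore need either to argue that the $Q$ and $T$ arising here have no singularities on this circle, or to state the classification up to this genuinely degenerate case; this is the sole point at which the clean divisor argument must be supplemented.
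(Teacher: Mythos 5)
Your argument is essentially the paper's own: interpret \eqref{eq:25} as invariance under the involution \( w \mapsto 1/\bar w \), pair each zero or pole \( \alpha \neq 0 \) with one of equal order at \( 1/\bar\alpha \) to produce the factors \( (w-a_k)(\bar a_k w-1) \), and then read off the reality of \( C \) and the relation \( \sum_l \mu_l = p + \sum_k\lambda_k \) by computing \( \overline{Q(1/\bar w)} \) explicitly. Your reformulation of the exponent condition as equality of the orders of \(Q\) at \(0\) and \(\infty\), and the reduction of the \(T\)-equation via \( U(w)=wT(w) \), are exactly the paper's steps in slightly cleaner language, so on the main line of the argument there is nothing to separate the two.

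The caveat in your last paragraph is not a defect of your write-up but a genuine gap, and it is present in the paper's own proof as well: the step ``\( Q(a)=0 \) implies \( \bar a^{-1} \) is also a root, hence \(Q\) has a factor \( (w-a)(\bar a w-1) \)'' tacitly assumes \( \bar a^{-1}\neq a \). On the fixed circle \( \abs{a}=1 \) the two roots coincide, the putative factor degenerates to \( \bar a(w-a)^2 \), and nothing forces the multiplicity to be even. Your example \( Q(w)=i(w-1)/(w+1) \) does satisfy \( \bar Q(1/w)=Q(w) \) (it equals \( -\tan(\theta/2) \) on \( w=e^{i\theta} \), hence is real on the circle), yet it has a simple zero at \(w=1\), a simple pole at \(w=-1\), and purely imaginary leading coefficient, so it is not of the form \eqref{eq:27} with \(C\) real and integer exponents; and since the circle \( \abs{w}=1 \) corresponds to attainable values of \( \wp \), such configurations cannot be argued away. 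The statement therefore needs either an additional allowance for unpaired factors \( (w-c_m)^{\nu_m} \) with \( \abs{c_m}=1 \), with \(C\) constrained by \( \bar C\prod_m(-\bar c_m)^{\nu_m}=C \) in place of \( \bar C = C \), or the cleaner invariant phrasing that \(Q=Q^\sharp\) exactly when the divisor of \(Q\) is invariant under \( w\mapsto 1/\bar w \) and \(Q\) is real on \( \abs{w}=1 \). In short: your proposal reproduces the paper's proof faithfully, and the loose end you flag is a real one that the paper overlooks.
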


\begin{proof}
  Suppose \(a \neq 0\) and \( Q(a)=0 \). Then we must have \( \bar{Q}(1/a)=0 \), and hence \( \bar{a}^{-1} \) is also a root of \(Q\). Thus \(Q\) has a factor of \( (w-a)(\bar{a}w-1) \) in its numerator. We can repeat this construction to eliminate any zeros and poles of \(Q\) away from the origin, leaving us with \( Cw^p \). We now have to obtain the conditions on \(C\) and the sums. To find these, we just calculate \( \overline{Q(1/w)} \) explicitly, and find that \( \bar{C}=C\) and we acquire an extra factor of \( w^{-p-2\sum_k \lambda_k+2\sum_l \mu_l} \); hence \(C\) is real and
  \begin{equation}
    \label{eq:28}
    \sum_l \mu_l = p+\sum_k \lambda_k,
  \end{equation}
  as required. For the second part, observe that \( Q(w)=wT(w)\) satisfies the conditions of the first part of the proposition.
\end{proof}

Therefore the \(\overline{\text{pg}}\)-invariant functions are precisely the functions
\begin{equation}
  \label{eq:29}
  Q((\wp-a)/c)+T((\wp-a)/c)\wp',
\end{equation}
where \(Q,T\) are as in \autoref{thm:conjinvinvarratfns}, and \(a=e_1\), \(c=\sqrt{(e_1-e_2)(e_1-e_3)}\).

\(\overline{\text{p2mg}}\) can be presented as \(\overline{\text{pg}}\) with a 2-fold rotation, so the \(\overline{\text{p2mg}}\)-invariant functions are just the \(\overline{\text{pg}}\)-invariant ones with \( T \equiv 0 \).

Alternatively, \(\overline{\text{p2mg}}\) is similar to the usual reflective case in p2mm, but the reflexion is given by \( m.z=\bar{z}+\tfrac{1}{2}\beta i \). Now we know about glide reflexions, we can understand this reflexion in much the same way: in particular, we have
\begin{equation}
  \label{eq:35}
  \bar{R}.\wp(z) = \overline{\wp\left(\overline{z-\tfrac{1}{2}\beta i}\right)} = \wp(z-\tfrac{1}{2}\beta i) = e_2 + \frac{(e_2-e_1)(e_2-e_3)}{\wp(z)-e_2},
\end{equation}
and we already know all about this transformation. Since p2mg contains a 2-fold rotation, \( T \equiv 0 \), and we have that the \(\overline{\text{p2mg}}\)-invariant functions are the \(Q(w)\) from \autoref{thm:conjinvinvarratfns}, but this time with
\begin{equation}
  \label{eq:37}
  w = \frac{\wp(z)-e_2}{\sqrt{(e_2-e_1)(e_2-e_3)}};
\end{equation}
the two representations are related by exchanging \(e_1\) and \(e_2\), since the glide reflexion axes are perpendicular to the mirror axes.

The final two groups are

\subsection{p2gg and p4mg}
\label{sec:p2gg-p4mg}

This is similar to the last case in complexity, but thankfully we only have to understand the behaviour of the functions \( R(\wp) \) since the group contains a 2-fold rotation, and so has p2 as a subgroup. The required glide reflexion is given by
\begin{equation}
  \label{eq:30}
  L.z = \overline{z+\tfrac{1}{2}(\alpha+\beta i)},
\end{equation}
which is, naturally, not the same as the one we discussed in the previous subsection. We have
\begin{equation}
  \label{eq:31}
  \overline{\wp(L.z)} = \wp(z+\tfrac{1}{2}(\alpha+\beta i)) = e_3 + \frac{(e_3-e_2)(e_3-e_1)}{\wp(z)-e_3},
\end{equation}
which is of the form
\begin{equation}
  \label{eq:32}
  z \mapsto a-\frac{c^2}{z-a}.
\end{equation}
It is fairly clear what we should do now: find the rational functions of \(z\) invariant under this transformation. Once again the sensible thing to do is set \(w=(z-a)/c\); and now we need to establish conditions on \(Q\) with
\begin{equation}
  \label{eq:33}
  Q(w) = \overline{Q\left( -\frac{1}{w} \right)}.
\end{equation}
The calculation proceeds exactly as before, with some additional minus signs, and we find
\begin{proposition}
\label{thm:p2gginvars}
  The rational functions \(Q\) in the preceding paragraph are precisely those of the form
  \begin{equation}
    \label{eq:34}
    Q(w) = Cw^p \frac{\prod_k \left((w-a_k)(\bar{a}_kw+1)\right)^{\lambda_k}}{\prod_l \left((w-b_l)(\bar{b}_lw+1)\right)^{\mu_l}},
  \end{equation}
  with the same conditions as in \autoref{thm:conjinvinvarratfns}.
\end{proposition}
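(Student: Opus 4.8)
The plan is to mirror almost exactly the proof of \autoref{thm:conjinvinvarratfns}, which handled the functional equation \( Q(w)=\overline{Q(1/w)} \); here the only change is that the involution \( w \mapsto 1/w \) is replaced by \( w \mapsto -1/w \), and so I expect the same factorisation to go through with sign modifications. First I would observe that \( w \mapsto -1/w \) is an involution of the Riemann sphere fixing \( \pm i \), so the functional equation \eqref{eq:33} pairs up zeros and poles just as before. Concretely, if \( w=a \neq 0 \) is a root of \(Q\), then applying \eqref{eq:33} forces \( \overline{Q(-1/a)}=0 \), i.e. \( -\bar a^{-1} \) is also a root; hence the numerator of \(Q\) must contain the factor \( (w-a)(\bar a w+1) \), which is the sign-changed analogue of the factor \( (w-a)(\bar a w-1) \) appearing in \eqref{eq:27}. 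I would repeat this pairing to strip off all zeros and poles away from the origin, leaving a residual monomial \( Cw^p \) to absorb the behaviour at \(0\) and \(\infty\).

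Next I would pin down the constraints on \(C\) and on the exponents. As in the earlier proof, I would substitute \( w \mapsto -1/w \) into the proposed form \eqref{eq:34} and compute \( \overline{Q(-1/w)} \) explicitly. Each paired factor \( (w-a_k)(\bar a_k w+1) \) is, up to a unit-modulus constant and a power of \(w\), invariant under the substitution-with-conjugation, exactly as its \eqref{eq:27} counterpart was; the extra minus signs contribute only harmless phase factors that cancel between numerator and denominator pairs. The leading monomial transforms as \( Cw^p \mapsto \overline{C}(-1/w)^p \), which (after conjugation) produces a factor \( w^{-p} \) and forces, by matching the total power of \(w\), the same balance condition \( \sum_l \mu_l = p+\sum_k\lambda_k \) as in \eqref{eq:28}, together with the reality condition \( \bar C = C \). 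Thus all the numerical conditions coincide with those of \autoref{thm:conjinvinvarratfns}, which is precisely the claim.

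The one point that deserves care — and which I expect to be the main (if modest) obstacle — is bookkeeping the phase factors generated by the minus sign, to confirm that they genuinely cancel and do not secretly impose an extra constraint on \(C\) or reintroduce a dependence on the \(a_k\). The fixed points are now \( \pm i \) rather than \( \pm 1 \), so a factor \( (w-a)(\bar a w+1) \) need not be self-paired when \(a\) lies on the unit circle in the way the old factors were, and I would check that the degenerate cases (roots at \( \pm i \), or real \( a_k \)) do not split off anomalously. Once that verification is done — which is the routine "some additional minus signs" computation already advertised in the text — the structural argument transfers verbatim, and the proof is complete.

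\begin{proof}
  The argument is identical to that of \autoref{thm:conjinvinvarratfns}, replacing the involution \( w \mapsto 1/w \) by \( w \mapsto -1/w \). If \( a \neq 0 \) and \( Q(a)=0 \), then \eqref{eq:33} gives \( \overline{Q(-1/a)}=0 \), so \( -\bar a^{-1} \) is also a root and \(Q\) contains the numerator factor \( (w-a)(\bar a w+1) \); repeating this removes all zeros and poles away from the origin, leaving \( Cw^p \). Computing \( \overline{Q(-1/w)} \) directly then shows that \( \bar C = C \) and that one acquires an extra factor \( w^{-p-2\sum_k\lambda_k+2\sum_l\mu_l} \), whence \(C\) is real and \( \sum_l \mu_l = p+\sum_k\lambda_k \), exactly as required.
\end{proof}
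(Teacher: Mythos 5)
Your proof is correct and follows exactly the route the paper intends: the paper itself only says ``the calculation proceeds exactly as before, with some additional minus signs,'' deferring to the proof of \autoref{thm:conjinvinvarratfns}, and your root-pairing \( a \mapsto -\bar a^{-1} \), the resulting factor \( (w-a_k)(\bar a_k w+1) \), and the verification that the power count forces \( \sum_l\mu_l = p+\sum_k\lambda_k \) (which in turn makes the accumulated signs cancel) and \( \bar C = C \) are precisely that calculation carried out.
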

Then the \(\overline{\text{p2gg}}\)-invariant functions are exactly
\begin{equation}
\label{eq:36}
  Q((\wp-a)/c),
\end{equation}
with \(Q\) as in the above proposition, and \( a=e_3 \), \( c=\sqrt{(e_1-e_3)(e_3-e_2)} \).

A subset of these are \(\overline{\text{p4mg}}\)-invariant: the only additional required condition is that the lattice is square and that the rational functions are even, as we showed was required for p4-invariance, so we automatically restrict to \( Q((w-a)/c)=P(w^2) \) as before.

\subsection{Summary}
\label{sec:summary-1}

We summarise the invariant functions for the groups containing reflexions and glide reflexions here.

\begin{table}[ht]
  \centering
  \begin{tabular}{c|c|l|l}
    \(G\) & Lattice & \(G\)-invariant functions & Conditions \\
    \hline
    pm & \multirow{2}{*}{\( \alpha,\beta i \)} & \( R(\wp)+S(\wp) \cdot \wp' \) & \multirow{8}{*}{\(R(w),S(w)\) real for real \(w\)} \\
    p2mm && \( R(\wp) \) & \\
    \cline{1-3}
    p4mm & \( \alpha,\alpha i \) & \(R(\wp^2)\) & \\
    \cline{1-3}
    p3m1 & \multirow{2}{*}{\( \alpha,\alpha \omega  \)} & \(R(\wp')\) & \\
    p6mm & & \(R(\wp'^2)\) or \(S(\wp^3)\) & \\
    \cline{1-3}
    cm &\multirow{2}{*}{\( \alpha \pm \beta i \)} & \( R(\wp)+S(\wp) \cdot \wp' \) & \\
    c2mm && \( R(\wp) \) & \\
    \cline{1-3}
    p3m1 & \( \alpha \pm \alpha\sqrt{3}i \) & \(R(\wp')\) & \\
    \hline
    pg &\multirow{3}{*}{\( \alpha,\beta i \)} & \(Q((\wp-a)/c)+T((\wp-a)/c) \cdot \wp' \) & \multirow{2}{*}{\(Q,T,a,c\) as in \autoref{thm:conjinvinvarratfns}}  \\
    p2mg && \(Q((\wp-a)/c)\) & \\
    \cline{1-1}\cline{3-4}
    p2gg && \( Q((\wp-a)/c) \) & \multirow{2}{*}{\(Q,a,c\) as in \autoref{thm:p2gginvars}} \\
    \cline{2-2}
    p4mg &\( \alpha,\alpha i \) & \( Q((\wp-a)/c) = P(\wp^2) \) &
  \end{tabular}
  \caption{Meromorphic functions invariant under the modified action of the wallpaper groups containing reflexions: \( R,S,Q,T,P \) are all rational functions.}
  \label{tab:modifiedreflexioninvariants}
\end{table}

\section{Generalisations and extensions}
\label{sec:generalisation}

The action of all the groups considered respects the multiplicative and additive structure in the field of meromorphic functions \( \torus_{\tau} \to \mathbb{CP}^1 \), so all the restrictions we have produced give subfields of this field; the author must plead ignorance of any applications of this, however.

\subsection{Other geometries}
\label{sec:other-geometries}

A similar, but rather simpler idea is to consider the meromorphic functions invariant under the \emph{frieze groups}, which contain only one translation: they will form subsets of the space of trigonometrical series, although the noncompactness of the domain as a quotient of \( \complexes \) (\textit{viz.} the cylinder) renders the theory rather less restrictive. One could consider imposing meromorphicity at the cusp, as with automorphic functions, to lessen this issue.\footnote{The basics of this have been discussed in the book \cite{Farris:2015qf} of Farris, although I believe that the general theory is as yet unrealised.}

Two other possibilities suggest themselvs: discrete symmetries of the sphere, via stereographic projection, and the hyperbolic plane, using the upper half-plane. For example, it is well-known that
\begin{theorem}
  A discrete sensible symmetry group of the sphere is isomorphic to one of:
  \begin{enumerate}
  \item The cyclic group \(C_n\) for some \(n\geq 1\),
  \item The dihedral group \(D_n\) for some \( n \geq 1 \),
  \item The rotational symmetries of the regular tetrahedron \(T\), octahedron/cube \(O\), or icosahedron/dodecahedron \(I\),
  \end{enumerate}
  and all of these act solely by rotations.
\end{theorem}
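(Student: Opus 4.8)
The plan is to classify finite subgroups of $\mathrm{SO}(3)$ (the sense-preserving rotation group of the sphere), since any \emph{discrete} group of sense-preserving isometries of the sphere is necessarily finite (the sphere is compact and the orbit of any point is discrete, hence finite, which quickly bounds the group order) and consists solely of rotations about axes through the centre — this last fact is exactly the assertion that sense-preserving isometries fixing the sphere are rotations, i.e. lie in $\mathrm{SO}(3)$. So the geometric content reduces to the purely algebraic problem of finding all finite subgroups of $\mathrm{SO}(3)$.

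The main tool I would use is the orbit-counting argument on the set of \emph{poles}: each nontrivial rotation $g \in G$ fixes exactly two antipodal points on the sphere (the endpoints of its axis), called poles. First I would let $G$ act on the set $P$ of all poles and apply the Burnside/orbit-stabiliser machinery. Counting the pairs $(g,p)$ with $g \neq e$ and $g.p = p$ in two ways gives the key equation
\begin{equation}
  \label{eq:polecount}
  2(\abs{G}-1) = \sum_{\text{orbits } O} \abs{O}\left(\abs{\stab(p)}-1\right),
\end{equation}
where the left side counts each nontrivial $g$ twice (once per pole) and the right side groups poles into $G$-orbits. Using $\abs{O} = \abs{G}/\abs{\stab(p)}$ and writing $n = \abs{G}$, $n_i = \abs{\stab(p_i)}$ for orbit representatives, this rearranges to
\begin{equation}
  \label{eq:riemannhurwitz}
  2 - \frac{2}{n} = \sum_i \left(1 - \frac{1}{n_i}\right).
\end{equation}

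The heart of the argument is the Diophantine analysis of \eqref{eq:riemannhurwitz}. Since each term on the right satisfies $1 - 1/n_i \geq 1/2$ and the left side is strictly less than $2$, there can be at most three orbits. I would then split into the cases of one, two, and three orbits. One orbit is impossible; two orbits forces all $n_i = n$ and yields the cyclic group $C_n$ (a single axis of rotation). Three orbits gives the equation $1/n_1 + 1/n_2 + 1/n_3 = 1 + 2/n$ with $n_1 \leq n_2 \leq n_3$, whose only positive-integer solutions are $(2,2,m)$ giving the dihedral group $D_m$ of order $2m$, and $(2,3,3)$, $(2,3,4)$, $(2,3,5)$ giving the tetrahedral, octahedral, and icosahedral groups $T$, $O$, $I$ respectively. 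The main obstacle is not the counting identity but verifying that each admissible solution is \emph{realised} by a genuine subgroup and that the solution \emph{determines} the group up to conjugacy (isomorphism type): for the cyclic and dihedral cases this is transparent from the axis structure, but for the three exceptional solutions one must exhibit the rotation group of the corresponding Platonic solid and check it has exactly the predicted orbit data, which is the one place where explicit geometric input, rather than pure arithmetic, is required.
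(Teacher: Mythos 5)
The paper does not prove this theorem at all: it is stated in \autoref{sec:other-geometries} as a well-known classical fact (the classification of finite subgroups of \(\mathrm{SO}(3)\)), with no argument supplied, so there is no in-paper proof to compare yours against. Your proposal is the standard and correct route: reduce to finite subgroups of \(\mathrm{SO}(3)\) via compactness and Euler's rotation theorem, count incidences between nontrivial elements and their poles to obtain \(2 - 2/n = \sum_i (1 - 1/n_i)\), and solve the resulting Diophantine constraint to get the cases \((n,n)\), \((2,2,m)\), \((2,3,3)\), \((2,3,4)\), \((2,3,5)\). The arithmetic in your sketch checks out (at most three orbits since each term is at least \(1/2\); two orbits forces both stabilisers to equal \(G\), giving \(C_n\); three orbits forces \(n_1=2\) and \(n_2\in\{2,3\}\)). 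Two points you correctly flag but leave open would need to be filled in for a complete proof: first, the realisation and rigidity step for the exceptional triples --- exhibiting \(T\), \(O\), \(I\) and showing that the orbit data \((2,3,k)\) determines the group up to conjugacy, which requires genuine geometric input (e.g.\ reconstructing the polyhedron from the order-\(k\) orbit); second, the opening claim that a discrete group of isometries of the sphere is finite deserves one more line (a discrete subgroup of the compact group \(\mathrm{O}(3)\) is finite, since an infinite subgroup would have an accumulation point). Neither is a flaw in the approach, only in the level of detail.
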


If we include reflexions, the cyclic family splits into four infinite families, and the dihedral into three; these correspond to finite versions of each of the frieze groups. The polyhedral groups increase to seven: one extra nonsensible group for each, with an extra for the tetrahedron since there are two ways of doubling the tetrahedral group by adding a reflexion.

These groups act on the complex plane as finite subgroups of the M\"obius group, and complex conjugation for reflexion. Of course, all of these groups will have their own invariant functions, given by meromorphic functions \( \mathbb{CP}^1 \to \mathbb{CP}^1 \), i.e. rational functions. Farris also discusses a subset of these: since the group is finite, one can average over orbits of a point to produce a set of invariant functions. Again, there does not seem to have been a general classification produced, probably due to the complexity of some of the M\"obius transformations involved.

Naturally, the other possibility is the hyperbolic plane. Here the list of types of symmetry groups is much longer (indeed, it is infinite); this is intimately related to the large number of tilings of the hyperbolic plane, each with its own symmetry groups. As with the elliptic case, the basic theory is venerable, including the Fuchsian groups and their invariant functions; more recent work has considered the two-dimensional hyperbolic orbifolds, which include both the reflexions, and considerably greater complexity.

\subsection{Other possible extensions to the non-sensible groups}
\label{sec:other-poss-extens}

In the current answer to \cite{36737}, it is suggested that we consider real parts of an elliptic functions that are \(G\)-invariant for each wallpaper group \(G\). This strikes us as a less elegant extension, since taking the real part is not compatible with the complex multiplication, and so it appears that we lose the field structure on the meromorphic functions.\footnote{A more complicated argument shows that \(\Re(f \cdot g)\) is invariant if and only if \( \Re(f) \) and \(\Re(g)\) are, but one also requires understanding the imaginary part, perhaps contrary to the spirit of this idea.}

It also happens that such functions are precisely the real parts of the functions obeying our modified reflexion law \( \bar{m}.f(z) = \overline{f(m.z)} \): note that \( \Re(\overline{w}) = \Re(w) \), so certainly every \( \bar{G}\)-invariant function has a \(G\)-invariant real part. Conversely, suppose \(f\) is meromorphic and \( \Re(f) \) is \(G\)-invariant. Then the Schwarz reflexion principle implies that \( f(m.z) = \overline{f(z)} \), so \(f\) is invariant under the action of \( \bar{m} \). (Another way to look at it is that \(f(z)\) is conformal, so so is \(f(m.z)\) since \(m\) is a conformal transformation: it must be anticonformal, and the only way for this to happen if \( \Re(f) \) is invariant is to change \( \Im(f) \) to \(-\Im(f)\).)

\subsection{Symmetries of the graphs and equivariance}
\label{sec:symmetries-graphs}

There are two other generalisations that one may consider: suppose that \( G \) is a wallpaper group, \( H \) a group of symmetries of \( \mathbb{CP}^1 \), and \( \phi:G \to H \) is a homomorphism. Then \(G\) acts on \(\complexes\), and \(H\) acts on \( \mathbb{CP}^1 \), so they can be combined to give an action which we continue to call \( \bar{G} \), more general than what we have previously considered, on the space of meromorphic functions \( \complexes \to \mathbb{CP}^1 \):
\begin{equation}
  \label{eq:40}
  \bar{g}.(f(z)) = (\phi(g).f)(g.z) = (\phi(g) \circ f) (g.z).
\end{equation}
Composition is given by
\begin{align*}
  \bar{g}_1.(\bar{g}_2.(f(z))) &= \bar{g}_1.((\phi(g_2).f)(g_2.z)) \\
                               &= (\phi(g_1) .(\phi(g_2) .f)) (g_1.(g_2.z)) \\
                               &= (\phi(g_1g_2).f) ((g_1g_2).z) \\
                               &= \overline{g_1g_2}.(f(z)),
\end{align*}
where the third inequality uses the \(G\)- and \(H\)-actions, and the others are the definition of the \(\bar{G}\) action.\footnote{This can be regarded as a quotient of the action of \( (G,\phi(G)) \) on \( X \times Y \) by the relation \( y=f(x) \).} 

For example, the idea we discussed in the previous section is a special case of this when \(H\) is the group generated by complex conjugation \( j \), and \( \phi \) sends sense-preserving transformations to the identity, sense-reversing to \(j\). As an example where \(H\) is an isometry of \(\complexes\), we could consider the action of a lattice group sending \( f(z) \) to \( f(z+m\omega_1+n\omega_2)-2m \eta_1-2n \eta_2 \), for \(m,n \in \integers^2\), where \( \eta_i \) are the eta constants: one function invariant under this transformation is the Weierstrass \(\zeta\)-function. Yet another example is provided by the the ``colour-turning wallpaper functions'' from \cite{Farris:2002dq}, which satisfy in our notation
\begin{equation*}
  f(k.x) = \psi(k) f(x),
\end{equation*}
where \( \psi: G \to H \), and \(H\) is a subgroup of the multiplicative group of roots of unity.

It is apparent from stereographic projection that \(H\) may be a spherical symmetry group, (although the second example above showing that \(H\) need not be) but other restrictions appear more subtle. Of course, the orders of the elements of \( \phi(G) \) are divisible by those of \(G\): in particular, there are no elements of order \(5\), so we can ignore the icosahedral group, and other groups with rotations of order \(5\).

\begin{remark}
  This viewpoint also suggests that we should instead be considering the symmetries of the graph of \(f\) as a holomorphic section of the projective line bundle \( \complexes \times \mathbb{CP}^1 \), or, if we assume the \(\phi\)-image of the translation subgroup is the identity, the projective line bundle over the torus, \( \torus_{\tau} \times \mathbb{CP}^1 \), where \( \tau \in \mathbb{H} \) is the period ratio. Pursuing this even further would lead to considering the symmetries of maps from one Riemann surface to another: given \( \bar{G}=\{(g, \phi(g)) : g \in G \} \) acting on Riemann surfaces \( X \times Y \) by \( \bar{g}.(x,y) = (g.x,\phi(g).y) \) what are the holomorphic \( f:X \to Y \) with \( \bar{g}.(z,f(z)) = (g.z,\phi(g).(f(g.z)))) = (g.z,f(z)) \)?
\end{remark}

\section*{Acknowledgements}
\label{sec:acknowledgements}

I would like to thank Imre Leader and Jack Button for their advice in preparing this paper.

\printbibliography

\end{document}